\def\resetMathstrut@{%
  \setbox\z@\hbox{%
    \mathchardef\@tempa\mathcode`\(\relax
    \def\@tempb##1"##2##3{\the\textfont"##3\char"}%
    \expandafter\@tempb\meaning\@tempa \relax
  }%
  \ht\Mathstrutbox@1.2\ht\z@ \dp\Mathstrutbox@1.2\dp\z@
}
\theoremstyle{definition}
\newtheorem{lemmat}{Lemma}[section]
\newtheorem{remark}[lemmat]{Remark}
\newtheorem{defn}[lemmat]{Definition}
\theoremstyle{plain}
\newtheorem{theorem}[lemmat]{Theorem}
\newtheorem{lemma}[lemmat]{Lemma}
\newtheorem{corollary}[lemmat]{Corollary}
\newtheorem{MainThm}{Theorem}
\newcommand{\Z}{\mathbb{Z}}           
\newcommand{\Q}{\mathbb{Q}}
\newcommand{\Fr}{\mathrm{Fr}}
\newcommand{\id}{\mathrm{id}}
\newcommand{\Riem}{{\mathcal R}}
\newcommand{\Hom}{\mathrm{Hom}}
\newcommand{\KO}{\mathrm{KO}}
\newcommand{\Emb}{\mathrm{Emb}}
\newcommand{\Imm}{\mathrm{Imm}}
\newcommand{\ind}{\mathrm{ind}}
\newcommand{\Diff}{\mathrm{Diff}}
\newcommand{\BDiff}{\mathrm{BDiff}}
\newcommand{\Tor}{\mathfrak{Tor}}
\DeclareMathOperator{\psc}{\mathrm{psc}}
\DeclareMathOperator{\pRc}{\mathrm{pRc}}
\newcommand{\bQ}{\mathbb{Q}}
\newcommand{\trg}{\mathrm{trg}}
\newcommand{\inddiff}{\mathrm{inddiff}}
\newcommand{\ev}{\mathrm{ev}}
\newcommand{\Spin}{\mathrm{spin}}
\newcommand{\bZ}{\mathbb{Z}}
\newcommand{\ceil}[1]{\lceil #1 \rceil}
\newcommand{\spann}{\mathrm{span}}
\newcommand{\bN}{\mathbb{N}}
\newcommand{\ch}{\mathrm{ch}}
\begin{document}

\title{On the topology of the space of Ricci-positive metrics}
\author{Boris Botvinnik}
\address{
Department of Mathematics\\
University of Oregon \\
Eugene, OR, 97403\\
USA
}
\email{botvinn@uoregon.edu}
\author{Johannes Ebert}
\address{
Mathematisches Institut der Westf{\"a}lischen Wilhelms-Universit{\"a}t M{\"u}nster\\
Einsteinstr. 62\\
DE-48149 M{\"u}nster\\
Germany
}
\email{johannes.ebert@uni-muenster.de}
\author{David J.~Wraith}
\address{Department of Mathematics and Statistics\\
National University of Ireland Maynooth\\
Maynooth\\
Ireland}
\email{david.wraith@mu.ie}
\subjclass[2000]{53C27, 57R65, 58J05, 58J50}
\begin{abstract}
We show that the space $\Riem^{\pRc}(W_g^{2n})$ of metrics with
positive Ricci curvature on the manifold $W^{2n}_g := \sharp^g (S^n
\times S^n)$ has nontrivial rational homology if $n \not \equiv 3
\pmod 4$ and $g$ are both sufficiently large. The same argument
applies to $\Riem^{\pRc}(W_g^{2n} \sharp N)$ provided that $N$ is spin
and $W_g^{2n} \sharp N$ admits a Ricci positive metric.
\end{abstract}  
\maketitle
\vspace*{-10mm}

\section{Introduction}
Let $M$ be a smooth closed manifold of dimension $d$. We denote by
$\Riem(M)$ the space of all Riemannian metrics on $M$ with the
$C^\infty$-topology, and we let $\Riem^{\pRc}(M)\subset
\Riem^{\psc}(M) \subset \Riem(M)$ be the subspaces of metrics with
positive Ricci curvature and with positive scalar curvature
respectively (abbreviated later as ``pRc metrics'' and ``psc
metrics'').

In recent years, there has been a considerable effort to study the
topology of the space of all positive scalar curvature metrics on
manifolds, see \cite{HSS,BER-W,CSS,CS,P17-2}. The results achieved so
far show that roughly the space $\Riem^{\psc}(M)$ at least is as
complicated as real $K$-theory, provided one focuses on spin manifolds
in dimensions at least five.

The aim of this paper is to study the space of Ricci positive metrics
on certain manifolds. Very little is known about the topology of this
space. It was shown in \cite{Wraith} that the space of Ricci positive
metrics on all homotopy spheres which bound parallelisable manifolds
in dimensions $4n-1$ ($n \ge 2$) has infinitely many
path-components. More recently, it was shown in \cite{CSS} (discussed
below) that for spin manifolds in dimension at least six, there are
order two elements in certain higher homotopy groups of this space of
metrics. We will supplement these results by showing that the space of
Ricci positive metrics for certain manifolds can display non-trivial
higher rational cohomology.

If $M$ is a \emph{spin} manifold, there is a secondary index map
$\inddiff_g:\Riem^{\psc}(M) \to \Omega^{\infty+d+1}\KO$, depending on
the choice of a basepoint $g \in \Riem^{\psc}(M)$ and mapping into a
suitable space of the real $K$-theory spectrum. The map is constructed
using the spin Dirac operator and the Schr\"odinger-Lichnerowicz
formula. We refer to the introduction of \cite{BER-W} for an informal
discussion and to \cite[Section 3.3]{BER-W} for precise
definitions. On homotopy groups, the map $\inddiff_g$ induces a
homomorphism
\begin{equation}\label{eq:inddiff-on-homotopy}
(\inddiff_g)_*:  \ \pi_m (\Riem^{\psc}(M),g) \to KO_{d+m+1}:= KO_{d+m+1}(*),
\end{equation}
and it has been proven \cite{BER-W} that
\eqref{eq:inddiff-on-homotopy} is surjective after rationalization for
each spin manifold of dimension $d\geq 6$ that admits a psc
metric. More recently, Perlmutter \cite{P17-1,P17-2} showed how to
cover the case $d=5$. The proof of these results relies heavily on a
remarkable property of the space $\Riem^{\psc}(M)$: a strengthening of
the classical Gromov-Lawson surgery theorem \cite{GL} (see
\cite{Chernysh} and also \cite{WalshC}) shows that the homotopy type
of $\Riem^{\psc}(M)$ only depends on the spin cobordism class of $M$
(for simply connected $M$ of dimension $d \geq 5$).

Positivity of the Ricci curvature is a much more restrictive condition
than positivity of scalar curvature, and it is well-known that no
variant of the Gromov-Lawson surgery theorem can hold for positive
Ricci curvature, as any such variant would imply that $S^1 \times
S^{d-1}$ has positive Ricci curvature for large enough $d$,
contradicting Myers' theorem \cite[Theorem 25 on p. 171]{Petersen}. In
particular, the general approach of the paper \cite{BER-W} showing the
nontriviality of \eqref{eq:inddiff-on-homotopy} does not work when the
space $\Riem^{\psc}(M)$ is replaced by $\Riem^{\pRc}(M)$.

Another, older, approach to construct elements in $\Riem^{\psc}(M)$
which was pioneered by Hitchin \cite{HitchinSpin} has the advantage
that it does not require elaborate constructions of metrics of
positive scalar curvature, and clearly carries over to positive Ricci
curvature. One uses the action of the diffeomorphism group $\Diff (M)$
or better the \emph{spin diffeomorphism group}\footnote{\ Let us briefly
  recall the definition. Fix a spin structure on $s$ on $M$ and let
  $\widetilde{\Diff^{\Spin}} (M) \subset \Diff(M)$ be the (finite
  index open) subgroup of diffeomorphisms which fix $s$ up to
  isomorphism. Then $\Diff^{\Spin}(M)$ consists of all $(f,\hat{f})$,
  $f \in \widetilde{\Diff^{\Spin}} (M)$ and $\hat{f}$ is an
  isomorphism $f^* s \cong s$ of spin structures. The group
  $\Diff^{\Spin}(M)$ is a central extension of
  $\widetilde{\Diff^{\Spin}} (M)$ by $\bZ/2$.} $\Diff^{\Spin} (M)$ on
$\Riem^{\psc}(M)$.

For a fixed $g_0 \in \Riem^{\psc} (M)$, there is the orbit map
$\ev_{g_0}: \Diff (M)^{\Spin} \xrightarrow[]{f \mapsto f^* g_0}
\Riem^{\psc} (M)$, and we obtain the composition
\begin{equation}\label{eq:compositionasd}
  \pi_m (\Diff (M)^{\Spin}) \xrightarrow[]{(\ev_{g_0})_*}
  \pi_m (\Riem^{\psc} (M)) \xrightarrow[]{(\inddiff_{g_0})_*} KO_{d+m+1}.
\end{equation}
The homomorphism \eqref{eq:compositionasd} does not depend on the
choice of $g_0$ (i.e., the two dependencies on $g_0$ cancel out) and
can be computed by topological means, using the family index
theorem. Clearly, the homomorphism \eqref{eq:compositionasd} factors
through $\pi_m (\Riem^{\pRc}(M))$ as well, provided the fixed metric
$g_0$ has positive Ricci curvature. Crowley-Schick-Steimle
\cite[Corollary 1.9]{CSS} proved that if $d \geq 6$, there are
elements in $\pi_{8r+i-d}(\Diff^{\Spin}(M))$, $i=0,1$, which under the
homomorphism \eqref{eq:compositionasd} hit the nontrivial element of
$KO_{8r+i+1}=\bZ/2$, generalizing previous work by Hitchin
\cite{HitchinSpin} and Crowley-Schick \cite{CS}.

If $m=4k-d-1$, the target of the homomorphism
\eqref{eq:compositionasd} is $KO_{d+m+1}=\bZ$. Then the homomorphism
\eqref{eq:compositionasd} can be computed by means of characteristic
classes using the following recipe. Let $f:S^{4k-d-1} \to
\Diff^{\Spin}(M)$ be a map representing an element $[f] \in
\pi_{4k-d-1}(\Diff^{\Spin}(M))$. Since $\pi_{4k-d-1}(\Diff^{\Spin}(M))
\cong \pi_{4k-d}(B\Diff^{\Spin}(M)),$ we obtain a corresponding element
in $\pi_{4k-d}(B\Diff^{\Spin}(M))$ represented by a map
$\bar{f}:S^{4k-d} \to B\Diff^{\Spin}(M),$ with $\bar{f}$ uniquely
determined by $f$ up to homotopy. Using $\bar{f}$ to pull back the
universal spin-$M$-bundle\footnote{\  A \emph{spin-$M$-bundle} is a fibre
  bundle with fibre $M$ and structure group
  $\Diff^{\Spin}(M)$. Equivalently, it is a smooth fibre bundle with
  fibre $M$, together with a spin structure on its vertical tangent
  bundle.}, we form the spin-$M$-bundle $\pi:E_f \to S^{4k-d}$ with
vertical tangent bundle $T_v E_f$. Let $\hat{A}_k (T_v E_f) \in H^{4k}(E_f;\bQ)$ be the degree $4k$ component of the $\hat{A}$-class of $T_v E_f$. From this we can compute the
tautological characteristic class
\begin{equation*}
\kappa_{\hat{A}_k}(E_f):= \pi_! (\hat{A}_k (T_v E_f)) \in H^{4k-d} (S^{4k-d};\bQ) 
\end{equation*}
and integrate over $S^{4k-d}$ to obtain the \emph{$\hat{A}$-genus
  $\hat{A}(f) \in \bQ$} of the homotopy class of $f$. The
Atiyah-Singer family index theorem implies that $\hat{A}(f)$ is an
integer and that
\begin{equation*}
(\inddiff_g)_*(\ev_g)_*([f]) = a_k \hat{A}(f) \in KO_{4k}  \cong \bZ
\end{equation*}
where $a_k = 1$ if $k$ is even and $a_k =\frac{1}{2}$ if $k$ is
odd. We conclude: if $M$ admits a Ricci positive metric $g_0$ and if
there is $f \in \pi_{d+1-4k} (\Diff^{\Spin} (M))$ with $\hat{A}(f)\neq
0$, then $\pi_{d+1-4k}(\Riem^{\pRc}(M)) \otimes \bQ \neq 0$ (and
$(\inddiff_{g_0})_* \neq 0$ in that degree).

However, it seems to be a very difficult problem to construct such
elements in the homotopy groups of $\Diff^{\Spin} (M)$ when the
relevant $KO$-group is $\bZ$. (It is known that such manifolds $M$
exist, by \cite[Theorem 1.3]{HSS}, but the proof in that paper does
not provide a concrete construction of $M$, let alone an $M$ which has
a Ricci positive metric.)

In the present paper, we study the manifolds $W^{2n}_g := (S^n\times
S^n)^{\# g}$, the connected sum of $g$ copies of $S^n \times S^n$. It
has been proven by Sha and Yang \cite[Theorem 1]{ShaYang} that these
admit a metric of positive Ricci curvature, if $n \geq 2$. It is known
that for each $f \in \pi_* (\Diff^{\Spin}(W_g^{2n}))$, the
$\hat{A}$-genus is trivial, i.e. $\hat{A}(f)=0$. This follows from
\cite[Proposition 1.9]{HSS} since the Pontrjagin classes of $W^{2n}_g$
are all trivial. Hence with the method just described, one cannot
detect nontrivial elements in $\pi_*(\Riem^{\pRc}(W^{2n}_g))\otimes \bQ$. What we do in this paper is to
prove that the rational \emph{cohomology}
$H^j(\Riem^{\pRc}(W^{2n}_g);\bQ)$ is nontrivial, for some values of
$g$, $n$ and $j$.
\begin{defn}
Let $M$ be a closed manifold of dimension $2n$. The \emph{genus}
$g(M)$ of $M$ is the largest number $g$ so that $M$ can be written as
a connected sum $N \sharp W_g^{2n}$ for some smooth manifold $N$.
\end{defn}
Let us state our main result.
\begin{MainThm}\label{mainthm}
Let $M^{2n}$ be a closed spin manifold of genus $g$ and assume that $n
\not \equiv 3 \pmod 4$. Assume that $M$ admits a metric of positive
Ricci curvature. Then
\begin{enumerate}
\item[{\rm (1)}] If $n=4q \geq 14$ and $g \geq 21$, then $H^j
  (\Riem^{\pRc}(M);\bQ)\neq 0$ for some $1 \leq j \leq 7$.
\item[{\rm (2)}] If $n=4q+1 \geq 11$ and $g \geq 17$, then $H^j
  (\Riem^{\pRc}(M);\bQ)\neq 0$ for some $1 \leq j \leq5$.
\item[{\rm (3)}] If $n=4q+2 \geq 8$ and $g \geq 13$, then $H^j
  (\Riem^{\pRc}(M);\bQ)\neq 0$ for some $1 \leq j \leq3$.
\end{enumerate}
\end{MainThm}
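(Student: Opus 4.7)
The plan is to detect non-zero rational cohomology classes in $\Riem^{\pRc}(M)$ via the cohomological version of the orbit-map / secondary-index composition used in the introduction for homotopy groups. Fixing any basepoint $g_0 \in \Riem^{\pRc}(M)$ (which exists by hypothesis), it suffices to exhibit a class $\alpha \in H^j(\Omega^{\infty+d+1}\KO;\bQ)$ (with $d=2n$) whose pullback along
\begin{equation*}
\Diff^{\Spin}(M) \xrightarrow{\ev_{g_0}} \Riem^{\pRc}(M) \xrightarrow{\inddiff_{g_0}} \Omega^{\infty+d+1}\KO
\end{equation*}
is non-zero in $H^j(\Diff^{\Spin}(M);\bQ)$; then the intermediate class $\inddiff_{g_0}^*(\alpha)\in H^j(\Riem^{\pRc}(M);\bQ)$ is already non-zero, proving the theorem.

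Using the genus assumption to write $M \cong N \sharp W_g^{2n}$, extending diffeomorphisms by the identity on $N\setminus D^{2n}$ gives a homomorphism $\Diff^{\Spin}(W_g^{2n},D^{2n})\to \Diff^{\Spin}(M)$, so it is enough to establish non-triviality for the corresponding composite starting from $\Diff^{\Spin}(W_g^{2n},D^{2n})$. Up to homotopy, this composite is the loop of the primary family index map
\begin{equation*}
B\Diff^{\Spin}(W_g^{2n},D^{2n}) \longrightarrow \Omega^{\infty+d}\MTSpin(2n) \longrightarrow \Omega^{\infty+d}\KO
\end{equation*}
attached to the universal spin $W_g^{2n}$-bundle (the second arrow being the spin orientation of $\KO$), reflecting the standard relationship between the primary topological index for bundles without a fibrewise metric and the secondary analytic index for bundles with a fibrewise pRc metric. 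The computation thus reduces to pulling back the rational polynomial generators $\iota_k \in H^{4k-d-1}(\Omega^{\infty+d+1}\KO;\bQ)$ (for $4k > d+1$) to the tautological classes $\kappa_{\hat{A}_k} = \pi_!(\hat{A}_k(T_v E))\in H^{4k-d}(B\Diff^{\Spin}(W_g^{2n},D^{2n});\bQ)$ and then applying loop-space suspension.

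The Galatius--Randal-Williams theorem then implies that, for $g$ within an appropriate linear stable range, the rational cohomology of $B\Diff^{\Spin}(W_g^{2n},D^{2n})$ is a polynomial algebra on the $\kappa$-classes, so $\kappa_{\hat{A}_k}$ is an indecomposable generator whenever $4k > d$, and its loop-space transgression $\sigma\kappa_{\hat{A}_k}\in H^{4k-d-1}(\Diff^{\Spin}(W_g^{2n},D^{2n});\bQ)$ is a non-zero exterior generator. Choosing the smallest (and, when it fits within the stable range, also next-smallest) $k$ with $4k>d+1$ yields the candidate degrees: $j\in\{3,7\}$ for $n=4q$, $j\in\{1,5\}$ for $n=4q+1$, and $j=3$ for $n=4q+2$ (the next candidate $j=7$ falls outside the stable range that $g\geq 13$ provides). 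The specific numerical lower bounds on $g$ arise from requiring the relevant $\kappa_{\hat{A}_k}$ (living in degree $j+1$ on the classifying space) to lie inside the GRW stable range.

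The main technical hurdle is the identification of the composition $\inddiff_{g_0}\circ \ev_{g_0}$ with the looped primary family index map; this is a careful manipulation at the level of infinite loop spaces, relying on the Atiyah--Singer family index theorem and the spin orientation $\MTSpin(2n)\to\KO$ (as suggested by the Madsen--Tillmann notation in the preamble). One must also check that the $\kappa_{\hat{A}_k}$-classes behave correctly under the extension-by-identity map $\Diff^{\Spin}(W_g^{2n},D^{2n})\to\Diff^{\Spin}(M)$, which works because the identity extension contributes trivially to the vertical $\hat{A}$-class over $N\setminus D^{2n}$. Finally, the exclusion of $n \equiv 3 \pmod 4$ reflects the fact that for those residues the smallest candidate $\kappa_{\hat{A}_k}$-class has a rational image under the spin orientation that vanishes, so the method produces no non-trivial class in the minimal degree.
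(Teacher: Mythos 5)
Your overall logic (find $\alpha$ with $(\inddiff_{g_0}\circ \ev_{g_0})^*\alpha \neq 0$, conclude $\inddiff_{g_0}^*\alpha \neq 0$) is sound, and the identification of $\inddiff_{g_0}\circ \ev_{g_0}$ with the looped family index map is a genuine theorem; but the step on which everything hinges is unjustified and, in this situation, essentially hopeless. You claim that since $\kappa_{\hat{A}_k}$ is an indecomposable generator of $H^*(B\Diff^{\Spin}(W_g^{2n},D);\bQ)$ in the Galatius--Randal-Williams range, its loop-space suspension is a nonzero class in $H^{4k-d-1}(\Diff^{\Spin}(W_g^{2n},D);\bQ)$. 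There is no such principle: GRW computes $H^*(B\Diff)$, not $H^*(\Diff)$, and the rational cohomology suspension $H^{m+1}(X;\bQ)\to H^m(\Omega X;\bQ)$ kills much more than decomposables. For simply connected $X$ its value on a class $c$ is nonzero precisely when $c$ pairs nontrivially with the image of the rational Hurewicz map, i.e.\ when there is $f\colon S^{m+1}\to X$ with $\langle c, f_*[S^{m+1}]\rangle \neq 0$; for $c=\kappa_{\hat{A}_k}$ this pairing is exactly the $\hat{A}$-genus $\hat{A}(f)$ of the bundle over the sphere. But for $W_g^{2n}$ (and hence for the classes you restrict along the extension-by-identity map) one has $\hat{A}(f)=0$ for all $f$, by \cite[Proposition 1.9]{HSS}, since all Pontrjagin classes of $W_g^{2n}$ vanish. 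This is precisely the obstruction recorded in the introduction of the paper, and it is why the paper explicitly remarks that its proof does \emph{not} yield nontriviality of $(\inddiff_{g_0})_*$ on homotopy or homology: the orbit-map route you propose is the one the authors explain cannot work for these manifolds, and your only argument for the key nonvanishing is invalid (modulo $\pi_1$-subtleties, the suspended class is in fact rationally invisible on spherical classes, and nothing in GRW controls $H^*(\Diff(W_g^{2n},D);\bQ)$).

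The paper's actual argument is structurally different and avoids $\Diff^{\Spin}(M)$ altogether. It uses a Serre spectral sequence detection theorem: for a spin $M$-bundle $E\to B$ with $B$ having finite fundamental group and $\kappa_{\hat{A}_k}(E)\neq 0$, the fibration $T^{\pRc}(\pi)\to B$ with fibre $\Riem^{\pRc}(M)$ satisfies $\Pi^*\kappa_{\hat{A}_k}(E)=0$ (Schr\"odinger--Lichnerowicz plus the family index theorem), so some differential $d_r$ must kill this class, forcing $H^{j}(\Riem^{\pRc}(M);\bQ)\neq 0$ for some $1\le j\le 4k-2n-1$. The base is $B\Tor_g^{2n}$, the classifying space of the Torelli group: finiteness of its fundamental group is Kreck's theorem and is exactly where the hypothesis $n\not\equiv 3 \pmod 4$ enters (your explanation via vanishing of the spin orientation is not correct), and nontriviality of $\kappa_{\hat{A}_k}$ on $B\Tor_g^{2n}$ requires the Ebert--Randal-Williams computation \cite{ER-W}, which is strictly more than GRW because on the Torelli group the classes $\kappa_{L_k}$ die; one must check $\hat{A}_k\neq 0$ modulo the span of the $L_k$, which forces $k\ge 2\lceil\frac{n+1}{4}\rceil$. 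This is why the relevant degrees are $4k-2n=8,6,4$ in the three cases (not the minimal $4,2,4$ of your bookkeeping: in those degrees the image of $\hat{A}_k$ is proportional to $L_k$ and vanishes in the relevant quotient), and why the theorem carries lower bounds on $n$ as well as on $g$, coming from the ER-W stability conditions rather than from the GRW range alone.
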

Even though we use the index theory of the Dirac operator, our proof
does \emph{not} yield the nontriviality of $(\inddiff_{g_0})_*$ on
homotopy or homology groups.

Using the known constructions of Ricci positive metrics on manifolds
of large genus, we obtain the following list of examples to which
Theorem \ref{mainthm} applies. Note that all examples admit spin
structures and are shown to have Ricci positive metrics in the sources
quoted.
\begin{corollary}\label{extra}
The conclusion of Theorem \ref{mainthm} holds for the manifold $M
\sharp W^{2n}_g$ (for $g$ and $n$ as in Theorem \ref{mainthm}) if $M$ is any
manifold from the following list.
\begin{enumerate}
\item[{\rm (1)}] $M=S^{2n}$ \cite[Theorem 1]{ShaYang};
\item[{\rm (2)}] $M$ is a connected sum with summands of the form ${\mathbb C}P^{d/2}\times S^{\mathbf{d}_I},$ ${\mathbb H}P^{d/4} \times S^{\mathbf{d}_I},$ ${\mathbb O}P^2 \times S^{\mathbf{d}_I},$ $S^d \times S^{\mathbf{d}_I},$
where $S^{\mathbf{d}_I}$ denotes the product of spheres $S^{d_1}\times \cdots \times S^{d_s}$ with $d_i\geq 3,$ and $d \ge 2.$ The quantities $d$ and $\mathbf{d}_I$ are allowed to vary from summand to summand, $d$ must be a multiple of four in the second case, and an odd multiple of two in the first case (to ensure that $\mathbb{C}P^{d/2}$ is spin). \cite[Theorem A]{Burdick2}, but see also \cite[Theorem B]{Wraith2} and \cite[Corollary 4.10]{Burdick}.
\item[{\rm (3)}] $M=N \sharp (S^{q_1} \times S^{r_1}) \sharp \cdots
  \sharp (S^{q_m} \times S^{r_m}),$ where $q_i+r_i=2n,$ $q_i,r_i \ge
  3$ for each $i$, and $N$ is an $S^a$-bundle over a Ricci positive
  base space $X^b$ with compact Lie structure group, $a+b=2n,$ and
  $a>b \ge 3$ \cite[Theorem B]{Wraith2};
\item[{\rm (4)}] $M=N \sharp E_1 \sharp \cdots \sharp E_m,$ with $N$
  as in {\rm (3)} but satisfying the tighter dimensional restriction
  $a \ge b+5,$ and where $E_1,\cdots,E_m$ are linear sphere bundles
  with base $S^{c_i},$ fibre $S^{2n-{c_i}},$ and $a-1 \ge c_i \ge b+4$
  \cite[Theorem C]{Wraith2};
\end{enumerate}
\end{corollary}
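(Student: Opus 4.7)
The plan is to verify case by case that, for each $M$ in the list, the manifold $M \sharp W_g^{2n}$ satisfies the three substantive hypotheses of Theorem \ref{mainthm}: being closed and spin, having genus at least $g$, and admitting a metric of positive Ricci curvature. The numerical bounds on $n$ and $g$ together with the residue condition $n \not\equiv 3 \pmod 4$ are inherited directly from the Main Theorem, and the genus hypothesis is automatic because $W_g^{2n}$ is a connected summand of $M \sharp W_g^{2n}$. Hence only the spin condition and the existence of a Ricci positive metric require verification.

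For the spin condition I use that a connected sum of spin manifolds is spin, so it suffices to check each building block. This is where the parity conditions in (2) enter: $\mathbb{C}P^{d/2}$ is spin precisely when $d/2$ is odd, which is ensured by the hypothesis that $d$ is an odd multiple of two, while $\mathbb{H}P^{d/4}$, $\mathbb{O}P^2$, and all spheres and products of spheres are automatically spin. In (3) and (4) the bundles $N$ and $E_i$ are simply connected linear sphere bundles of the form appearing in Wraith's constructions, and one verifies they are spin from the bundle data.

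For the existence of a Ricci positive metric on $M \sharp W_g^{2n}$ I simply invoke the cited constructions, after observing that $M \sharp W_g^{2n}$ still fits the template they treat. In (1), $S^{2n} \sharp W_g^{2n} \cong W_g^{2n}$ is handled directly by Sha--Yang. In (2), $M \sharp W_g^{2n}$ is a further connected sum of Burdick's allowed building blocks, the extra $W_g^{2n}$ contributing $g$ factors $S^n \times S^n$, each of the permitted form $S^d \times S^{\mathbf{d}_I}$ with $d=n$ and $\mathbf{d}_I = (n)$ (valid since $n \geq 3$). In (3) and (4), $M \sharp W_g^{2n}$ is the connected sum of $N$ with a collection of sphere products, respectively sphere bundles, of the correct dimension; the additional $S^n \times S^n$ summands appear as trivial $S^n$-bundles over $S^n$ and slot into the Wraith template.

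The main (and only) obstacle is bookkeeping: one must carefully verify that the dimensional restrictions in each cited construction (for instance $q_i, r_i \geq 3$ in Wraith's Theorem B, or $a - 1 \geq c_i \geq b + 4$ in Wraith's Theorem C) remain compatible with the extra $S^n \times S^n$ summands of $W_g^{2n}$, where the relevant $c_i$ is $n$. Once this compatibility is checked case by case using the lower bounds on $n$ imposed by the Main Theorem, Theorem \ref{mainthm} applies to $M \sharp W_g^{2n}$ and yields the claimed nontriviality of the rational cohomology of $\Riem^{\pRc}(M \sharp W_g^{2n})$ in each case.
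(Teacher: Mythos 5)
The paper gives no separate argument for this corollary beyond the sentence preceding it: the genus hypothesis is automatic because $W_g^{2n}$ is a connected summand, and the spin condition together with Ricci positivity of $M\sharp W_g^{2n}$ is delegated to the quoted sources, after which Theorem \ref{mainthm} applies. Your overall plan is the same, but your case (4) contains a genuine gap. You propose to absorb each $S^n\times S^n$ summand of $W_g^{2n}$ into the Theorem C template as a trivial $S^n$-bundle over $S^n$, i.e.\ as an $E_i$ with $c_i=n$, and you assert that the constraint $a-1\geq c_i\geq b+4$ can be checked ``using the lower bounds on $n$''. It cannot: from $a+b=2n$ and $a\geq b+5$ one only gets $b\leq n-3$, hence $b+4\leq n+1$, and the required inequality $n\geq b+4$ fails precisely when $b=n-3$. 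For instance $n=14$, $b=11$, $a=17$ satisfies every hypothesis of item (4), yet $c_i=n=14<b+4=15$. So for such admissible $N$ your argument produces no Ricci positive metric on $M\sharp W_g^{2n}$, and the bookkeeping step on which your case (4) rests is false as stated.

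What the corollary actually uses (and what the citations are meant to certify) is that the quoted constructions cover the \emph{whole} connected sum $M\sharp W_g^{2n}$: the extra $S^n\times S^n$ summands are product-of-spheres summands with $q_i=r_i=n\geq 3$, of the kind allowed alongside $N$ in Theorem B, and the gluing technique underlying Wraith's results permits adjoining such product summands together with the bundle summands $E_i$; they are not to be shoehorned into the Theorem C restriction $c_i\geq b+4$. A smaller point: in (3) and (4) you claim one ``verifies from the bundle data'' that $N$ is spin, but an $S^a$-bundle over an arbitrary Ricci positive base need not be spin (a trivial bundle over $\mathbb{C}P^2$ already fails), so spin-ness is an additional condition on $X$ and the bundle; the paper deals with this simply by asserting that the listed examples admit spin structures, not by deriving it. Your treatment of cases (1)--(3), including fitting the $S^n\times S^n$ summands into Burdick's template in (2), is correct and matches the paper's intent.
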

\subsection{Acknowledgments}
We would like to thank Achim Krause for pointing out a counterexample to an erronous argument in the first draft of this paper, and Manuel Krannich for spotting a mistake in the first preprint version. Furthermore, we thank the referee for suggesting a simpler exposition at various places in this paper. 
\section{Proofs}
\subsection{The detection theorem}
The proof of Theorem \ref{mainthm} is based on the following general
detection principle.
\begin{theorem}\label{thm:generaldetection}
Let $M$ be a $d$-dimensional closed smooth spin manifold. Assume that 
\begin{enumerate}
\item[{\rm(1)}] $M$ has a metric of positive Ricci curvature,
\end{enumerate}
and that there is a smooth
$M$-bundle $\pi:E \to B$ with a spin structure on its vertical tangent
bundle $T_v E$, such that
\begin{enumerate}
\item[{\rm (2)}] the base space $B$ is $0$-connected and has finite
  fundamental group, and
\item[{\rm (3)}] for some $k> d/4$, the tautological class
  $\kappa_{\hat{A}_k} (E) \in H^{4k-d}(B;\bQ)$ associated to the
  $k$-th component of the $\hat{A}$-class is nonzero.
\end{enumerate}
Then the cohomology group $H^j
(\Riem^{\pRc}(M);\bQ)$ is nontrivial for some $1 \leq j \leq 4k-d-1$.
\end{theorem}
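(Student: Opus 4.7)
The plan is to combine the Borel construction for the $\Diff^{\Spin}(M)$-action on $\Riem^{\pRc}(M)$ with the Schr\"odinger-Lichnerowicz identity and the Atiyah-Singer family index theorem. The bundle $\pi\colon E\to B$ is classified by a map $\bar f\colon B \to B\Diff^{\Spin}(M)$, and the universal fibration
\begin{equation*}
\Riem^{\pRc}(M) \lra E\Diff^{\Spin}(M) \times_{\Diff^{\Spin}(M)} \Riem^{\pRc}(M) \lra B\Diff^{\Spin}(M)
\end{equation*}
classifies spin-$M$-bundles equipped with fibrewise pRc metrics. First, I would pull this back along $\bar f$ to obtain a fibration $p\colon\mathcal{E}\to B$ with fibre $\Riem^{\pRc}(M)$, whose points are pairs consisting of $b\in B$ together with a pRc metric on the fibre $E_b$. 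Hypothesis~(1) guarantees the fibre is nonempty, so $\mathcal{E}$ is nonempty.

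Next, the pulled-back spin-$M$-bundle $\tilde E := p^* E \to \mathcal{E}$ carries a tautological fibrewise pRc metric, so the Schr\"odinger-Lichnerowicz formula forces the fibrewise spin Dirac operator to have trivial kernel, hence vanishing family index in $K^0(\mathcal{E})$. By Atiyah-Singer for families,
\begin{equation*}
0 \;=\; \ch(\ind \mathcal{D}) \;=\; \pi_! \bigl(\hat A(T_v \tilde E)\bigr) \in H^*(\mathcal{E}; \bQ).
\end{equation*}
Extracting the degree $4k-d$ component (which is meaningful because $k > d/4$) yields $p^*\kappa_{\hat A_k}(E) = \kappa_{\hat A_k}(\tilde E) = 0$ in $H^{4k-d}(\mathcal{E};\bQ)$. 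By hypothesis~(3) the class $\kappa_{\hat A_k}(E)\in H^{4k-d}(B;\bQ)$ is itself nonzero, so it lies in the nontrivial kernel of $p^*$.

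The remaining step is to run the Serre spectral sequence of $p$. To sidestep local coefficient issues I would first use hypothesis~(2) to pass to the universal cover $\tilde B \to B$: since $\pi_1(B)$ is finite, the transfer homomorphism shows $H^*(B;\bQ) \hookrightarrow H^*(\tilde B;\bQ)$, so $\kappa_{\hat A_k}(E)$ pulls back to a nonzero class on $\tilde B$, while the fibration pulled back over $\tilde B$ has trivial monodromy and $E_2^{p,q} \cong H^p(\tilde B;\bQ) \otimes H^q(\Riem^{\pRc}(M);\bQ)$. The pulled-back $\kappa_{\hat A_k}$ sits in $E_2^{4k-d,0}$, is nonzero, but vanishes in $E_\infty$, hence must be hit by a differential $d_r\colon E_r^{4k-d-r,\,r-1} \to E_r^{4k-d,\,0}$ for some $r \in [2,\,4k-d]$. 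This forces the tensor product $H^{4k-d-r}(\tilde B;\bQ) \otimes H^{r-1}(\Riem^{\pRc}(M);\bQ)$ to be nonzero, and hence $H^{r-1}(\Riem^{\pRc}(M);\bQ) \neq 0$ for some $r-1 \in [1,\,4k-d-1]$, which is the required conclusion.

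The main technical obstacle is the family index step over the infinite-dimensional parameter space $\mathcal{E}$: one must verify that the fibrewise Dirac operator defines a genuine family index class in $K^0(\mathcal{E})$ and that fibrewise Lichnerowicz vanishing really delivers the characteristic-class identity on $\mathcal{E}$. This requires invoking the analytic machinery of \cite{BER-W}; once that foundation is in place, the remainder of the argument is a formal Serre spectral sequence chase driven by the mismatch between $\kappa_{\hat A_k}(E) \neq 0$ upstairs and $p^*\kappa_{\hat A_k}(E) = 0$ downstairs.
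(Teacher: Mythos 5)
Your proposal is correct and follows essentially the same route as the paper's proof: the Borel-construction fibration with fibre $\Riem^{\pRc}(M)$, the vanishing of $\kappa_{\hat{A}_k}$ on the total space via the Schr\"odinger--Lichnerowicz formula and the family index theorem, the reduction to the universal cover using a transfer argument, and the concluding Serre spectral sequence chase (including the key use of $\Riem^{\pRc}(M)\neq\emptyset$ to see the class survives to $E_2^{4k-d,0}$). The only cosmetic differences are that the paper phrases the index argument in $KO^{-d}$ followed by complexification and the Chern character, and performs the passage to the universal cover at the outset rather than just before the spectral sequence step.
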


\begin{remark}
The proof applies verbatim to any other curvature condition which implies positivity of scalar curvature, e.g. positive sectional curvature. We show below that a manifold as in Theorem \ref{mainthm} satisfies the hypotheses (2) and (3). On the other hand, manifolds of large genus do not admit metrics of positive sectional curvature, by Gromov's Betti number estimate \cite[Theorem 0.2.A]{GromovBetti}. More generally, we are not aware of any example of a manifold $M$ which carries a metric of positive sectional curvature and satisfies the hypotheses (2) and (3) of Theorem \ref{thm:generaldetection}.
\end{remark}

\begin{proof}[Proof of Theorem \ref{thm:generaldetection}]
Consider the universal cover $p: \tilde B \to B$. Since $\pi_1(B)$ is
a finite group, it follows from a transfer argument \cite[Proposition
  3G.1]{Hatcher} that the induced homomorphism
\begin{equation*}
  p^* : H^*(B;\Q) \to H^*(\tilde B;\Q)
\end{equation*}
is injective. In particular, this means that the pull-back
spin-$M$-bundle $p^*E \to \tilde B$ satisfies the hypotheses of the
theorem, and we can assume without loss of generality that the base
$B$ is simply-connected.

Associated with the fibre bundle $\pi:E \to B$, there is a fibration
\begin{equation}\label{detection-proof-finrations}
  T^{\pRc}(\pi) \xrightarrow[]{\Pi} B,
\end{equation}
whose fibre over $b \in B$ is the space
$\Riem^{\pRc}(\pi^{-1}(b))$. The construction of that fibration is as
follows: there is a $\Diff(M)$-principal bundle $Q \to B$ such that $Q
\times_{\Diff(M)} M \cong E$ (there is no need to take the spin
condition into account here). We define
\begin{equation*}
  T^{\pRc}(\pi):= Q  \times_{\Diff(M)} \Riem^{\pRc}(M).
\end{equation*}
The pullback $M$-bundle $\Pi^{*} E \to T^{\pRc}(\pi)$ admits a spin
structure since the original fibre bundle $\pi: E\to B$ does, and in
addition it has a fibrewise Riemannian metric of positive Ricci
curvature, and hence of positive scalar curvature. 

Now let $\ind_E \in KO^{-d} (B)$ be the index class of the fibrewise Dirac operator on $E$. The Schr\"odinger-Lichnerowicz formula implies that $\Pi^* \ind_E =0 \in KO^{-d}(T^{\pRc}(\pi))$. Under the composition 
\[
 KO^{-d}(B) \stackrel{c}{\to} K^{-d}(B) \stackrel{\ch_{2k }}{\to}  H^{4k-d}(B;\bQ)
\]
of the complexification map with the $2k$th component of the Chern character, the class $\ind_E$ is mapped to the tautological class $\kappa_{\hat{A}_k} (E)$ (this is the cohomological version of the Atiyah-Singer family index theorem for the spin Dirac operator). Therefore, we obtain that
\begin{equation}\label{eq:vanishingformula}
 \Pi^* \kappa_{\hat{A}_k} (E) =  \ch_{2k} (c(\Pi^* \ind_E)) = 0 \in H^{4k-d}(T^{\pRc}(\pi);\bQ).
\end{equation}
In other words: the nontrivial element $\kappa_{\hat{A}_k} (E) \in
H^{4k-d}(B;\bQ)$ lies in the kernel of $\Pi^*$. Now we consider the
Serre spectral sequence of the fibration
\eqref{detection-proof-finrations}. The homomorphism $\Pi^*:
H^{*}(B;\bQ) \to H^{*}( T^{\pRc}(\pi))$ agrees with the composition
\begin{equation*}
 \xymatrix{
   H^{4k-d}(B;\bQ) \ar[r]^-{\iota} & H^{4k-d}(B;H^0 (\Riem^{\pRc}(M);\bQ)) = 
   E_2^{4k-d,0} \ar@{->>}[r] &  E_{4k-d+1}^{4k-d,0}
   \ar@{>->}[r] & H^{4k-d}( T^{\pRc}(\pi)).
 }
\end{equation*}
The first homomorphism $\iota$ is injective: it is induced from the
injective (it is here that the hypothesis $\Riem^{\pRc}(M) \neq \emptyset$ is used) map $\bQ \to H^0 (\Riem^{\pRc}(M);\bQ)$ of coefficient
systems over $B$. Since the base $B$ is simply connected, an injective
map of rational coefficient systems is split-injective and therefore
induces an injective map in cohomology.

Then it follows that for some $2 \leq r \leq 4k-d$, the differential
$d_r: E_r^{4k-d-r, r-1} \to E_r^{4k-d,0}$ is nonzero. Hence
$E_r^{4k-d-r, r-1} \neq 0$, which forces $H^{4k-d-r} (B; H^{r-1}
(\Riem^{\pRc}(M);\bQ)) = E_2^{4k-d-r, r-1} \neq 0$. In particular
$H^{r-1} (\Riem^{\pRc}(M);\bQ) \neq 0$, as desired.
\end{proof}

\subsection{The Torelli group of $W_{g}^{2n}$ and its cohomology}
We now give the proof of Theorem \ref{mainthm}. It relies on
computations by the second-named author and Randal-Williams
\cite{ER-W} which by themselves rely on many deep ingredients from
high-dimensional manifold theory, such as \cite{GRWHomStabI} and
\cite{GRW14}, among other things.
\begin{defn}
Let $D\subset W_g^{2n}$ be a fixed embedded closed $2n$-disk, and let
$\Diff(W_g^{2n},D)$ denote the group of diffeomorphisms of $W_g^{2n}$
which restrict to the identity on some open neighborhood of the disk
$D$. The \emph{Torelli group} $\Tor_g^{2n}$ is the group of
diffeomorphisms $\phi \in \Diff(W_g^{2n},D)$ which induce the identity
in the middle homology group of $W_{k}^{2n}$:
\begin{equation*}
  \Tor_g^{2n} : = \{ \ \phi \ | \ \phi_*=\id : H_n(W_g^{2n};\Z) \to
  H_n(W_g^{2n};\Z) \ \}\subset \Diff(W_g^{2n},D).
\end{equation*}
\end{defn}
Clearly the inclusion $\Tor_g^{2n} \subset \Diff(W_g^{2n},D)$ induces
a map $\eta: B\Tor_g^{2n} \to \BDiff(W_g^{2n},D)$ of the corresponding
classifying spaces. There is a fibre sequence
\begin{equation}\label{fibre01}
B\Tor_g^{2n} \xrightarrow[ ]{\eta} \BDiff(W_g^{2n},D) \to
B\Gamma(W_g^{2n}),
\end{equation}
where $\Gamma(W_g^{2n})\subset \mathrm{GL}(H_n(W_g^{2n};\Z))$ is a
certain arithmetic group, see \cite[Section 2.1]{ER-W}.
\begin{lemma}\label{lem:torelli-finitelymanycomponents}
Let $n \geq 4$ and $n \not \equiv 3 \pmod 4$. Then $\pi_0
(\Tor_g^{2n})$ is a finite group.
\end{lemma}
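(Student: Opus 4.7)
The plan is to sandwich $\pi_0(\Tor_g^{2n})$ between finite groups by invoking Kreck's classical structural theorem on diffeomorphism groups of $(n-1)$-connected almost parallelizable $2n$-manifolds, and then reducing to a finiteness statement for the unstable homotopy group $\pi_n(SO(n))$.

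First I would use the fibration \eqref{fibre01} to identify $\pi_0(\Tor_g^{2n})$ with the kernel of the surjection $\pi_0(\Diff(W_g^{2n},D)) \to \Gamma(W_g^{2n})$. Kreck's theorem then produces a short exact sequence of abelian groups
\[
  0 \longrightarrow \Theta_{2n+1} \longrightarrow \pi_0(\Tor_g^{2n}) \longrightarrow H^n(W_g^{2n};\pi_n(SO(n))) \longrightarrow 0,
\]
in which $\Theta_{2n+1}$ embeds as the inertia subgroup via gluing of homotopy $(2n+1)$-spheres and the quotient records the residual tangential obstruction to isotopy. The leftmost term is finite by Kervaire--Milnor, and the rightmost is isomorphic to $\pi_n(SO(n))^{\oplus 2g}$ because $H^n(W_g^{2n};\Z) \cong \Z^{2g}$ is torsion-free. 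Thus the lemma reduces to showing that $\pi_n(SO(n))$ is finite when $n \not\equiv 3 \pmod 4$.

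Second, I would verify the finiteness of $\pi_n(SO(n))$ by combining Bott periodicity with a climb of the fibration tower $SO(m) \to SO(m+1) \to S^m$. The stable group $\pi_n(SO) \cong KO_{n+1}$ has rank one precisely when $n \equiv 3 \pmod 4$ and is either $0$ or $\Z/2$ otherwise, so $\pi_n(SO)$ is finite under our hypothesis. The successive kernels and cokernels of the comparison maps $\pi_n(SO(m)) \to \pi_n(SO(m+1))$ for $m = n, n+1$ are subquotients of $\pi_{n+1}(S^n) = \Z/2$ (finite) and of the image of $\partial:\pi_{n+1}(S^{n+1})=\Z\to \pi_n(SO(n+1))$, which is the clutching class of $TS^{n+1}$ and is detected rationally by the Euler number $\chi(S^{n+1})$. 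A case analysis on the residue of $n$ modulo $4$, or consulting the classical tables for $\pi_n(SO(n))$, shows that the rank of $\pi_n(SO(n))$ agrees with that of $\pi_n(SO)$; since $\pi_n(SO(n))$ is moreover finitely generated, it is finite.

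The main obstacle is extracting Kreck's exact sequence in precisely the Torelli-group form displayed above. His original statements are formulated for the full mapping class group as an extension of an arithmetic automorphism group of the intersection form by a finite abelian contribution, and identifying that contribution with $\pi_0(\Tor_g^{2n})$ requires tracking the obstruction theory along the fibre sequence \eqref{fibre01}. This bookkeeping is essentially formal given the almost parallelizability of $W_g^{2n}$ (automatic, since its Pontryagin classes vanish), but must be carried out with care; as a cross-check one may also bypass the detailed rank analysis of $\pi_n(SO(n))$ by quoting Kervaire's explicit calculations directly.
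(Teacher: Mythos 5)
Your overall strategy coincides with the paper's: feed everything into Kreck's structure theorem and reduce to the finiteness of $\Theta_{2n+1}$ (Kervaire--Milnor) together with finiteness of the relevant (un)stable homotopy of the orthogonal group. However, there is a genuine gap at the crucial first step. Kreck's exact sequence is a statement about $\pi_0(S\Diff(W_g^{2n}))$, the group of isotopy classes of diffeomorphisms of the \emph{closed} manifold acting trivially on $H_n$, not about the disk-fixing Torelli group $\Tor_g^{2n}$. Your displayed sequence with $\pi_0(\Tor_g^{2n})$ in the middle is therefore not what Kreck proves, and the comparison of the two groups is precisely what has to be supplied; you flag this as the ``main obstacle'' but the repair you propose --- tracking obstruction theory along the fibre sequence \eqref{fibre01} --- is aimed at the wrong place, since \eqref{fibre01} only expresses $\pi_0(\Tor_g^{2n})$ as the kernel of $\pi_0(\Diff(W_g^{2n},D)) \to \Gamma(W_g^{2n})$, which you have already used. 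The missing input is the comparison of $\Diff(W_g^{2n},D)$ with $\Diff^+(W_g^{2n})$: the paper uses the fibre sequence $\Diff(W_g^{2n},D) \to \Diff^+(W_g^{2n}) \to \Emb^+(D,W_g^{2n}) \simeq \Fr^+(TW_g^{2n})$, the connectedness of $\Fr^+(TW_g^{2n})$ and $\pi_1(\Fr^+(TW_g^{2n}))\cong \Z/2$, together with the surjectivity of $\pi_0(\Diff(W_g^{2n},D)) \to \pi_0(\Diff^+(W_g^{2n}))$, to conclude that $\pi_0(\Tor_g^{2n}) \to \pi_0(S\Diff(W_g^{2n}))$ is surjective with kernel of order at most $2$; finiteness then follows from Kreck's sequence for the target. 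Since the discrepancy is finite, your conclusion survives once such an argument is inserted, but without it the asserted sequence is unjustified.

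Two further points. First, in Kreck's theorem the kernel is $\Theta_{2n+1}/\Sigma$ (a quotient, not an embedded copy of $\Theta_{2n+1}$) and the cokernel is $\Hom(H_n(W_g^{2n};\Z),S\pi_n(SO(n)))$ with $S\pi_n(SO(n))\subset \pi_n(SO(n+1))$ the image of stabilization rather than $\pi_n(SO(n))$ itself; both discrepancies are harmless for finiteness, since finiteness of $\pi_n(SO(n))$ implies finiteness of these groups. Second, your rank argument is too quick as stated: for $n\equiv 1 \pmod 4$ the intermediate group $\pi_n(SO(n+1))$ genuinely has rank $1$, generated by $\partial\iota_{n+1}=[TS^{n+1}]$, so the ranks do \emph{not} agree with the stable range at that stage. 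To conclude that $\pi_n(SO(n))$ has rank $0$ one must additionally observe that its image in $\pi_n(SO(n+1))$ is the kernel of $p_*:\pi_n(SO(n+1))\to\pi_n(S^n)$, while $p_*[TS^{n+1}]=\pm\chi(S^{n+1})\iota_n=\pm 2\iota_n$, so the rank-one part is not hit; together with the finiteness of $\ker(\pi_n(SO(n))\to\pi_n(SO(n+1)))$, a quotient of $\pi_{n+1}(S^n)=\Z/2$, this closes the argument. Your fallback of quoting Kervaire's explicit computations is of course legitimate and is essentially what the paper does, which instead reads off the finiteness of $S\pi_n(SO(n))$ for $n\not\equiv 3\pmod 4$ from the table in Kreck's paper.
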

\begin{proof}
This is a corollary of a result by Kreck \cite[Theorem
  2]{Kreck79}. Kreck studies the group of orientation-preserving
diffeomorphisms $\Diff^+(W^{2n}_g)$ (not just those preserving a disk)
and the subgroup of diffeomorphisms acting as the identity on $H_n
(W_g^{2n})$, which he denotes by $S \Diff(W_g^{2n})$. There is an
analogous fibre sequence $ B S\Diff(W^{2n}_g)\to B\Diff^+ (W^{2n}_g)
\to B \Gamma(W^{2n}_g)$, and the comparison with the fibre sequence
\eqref{fibre01} yields the following diagram of groups
\begin{equation}\label{lem:torelli-finitelymanycomponentsproof1}
 \xymatrix{
 0 \ar[r]&  \pi_0 (\Tor_g^{2n})\ar[r] \ar[d]&  \pi_0 (\Diff(W^{2n}_g,D)) \ar@{->>}[d] \ar[r] & \Gamma(W^{2n}_g) \ar @{=}[d] \ar[r] & 1\\
 0 \ar[r] & \pi_0 (S \Diff (W_{k}^{2n}))\ar[r] &  \pi_0 (\Diff^+ (W^{2n}_g)) \ar[r] & \Gamma(W^{2n}_g) \ar[r] & 1.
 }
\end{equation}
Kreck showed that the lower sequence is exact \cite[Theorem
  2]{Kreck79}, and the upper one
is exact as well, see \cite[Proposition 2.3]{ER-W}. It is part of the
proof of \cite[Proposition 2.3]{ER-W} that the middle vertical map is
surjective. Using the fibre sequence
\begin{equation*}
  \Diff (W_g^{2n},D) \to
  \Diff^+ (W_g^{2n}) \to \Emb^+ (D,W_{k}^{2n}),
\end{equation*}
the weak homotopy equivalence\footnote{This is a standard fact: the inclusion $ \Emb^+ (D,W_g^{2n})\to \Imm^+ (D,W_g^{2n})$ into the space of orientation-preserving immersions is a weak equivalence by a scaling argument, and the map $\Imm^+ (D,W_g^{2n}) \to \Fr^+ (W_{g}^{2n})$ that takes the derivative at the origin is a weak homotopy equivalence by the Smale--Hirsch immersion theorem.}
 $\Emb^+ (D,W_g^{2n}) \simeq \Fr^+(TW_g^{2n})$ of the space of orientation-preserving embeddings of $D$
to the oriented frame bundle of $W_g^{2n},$ and the fact that
$\Fr^+ (TW_g^{2n})$ is connected and has $\pi_1 (\Fr^+ (TW_g^{2n}))
\cong \bZ/2$, we deduce that the kernel of the middle vertical map in
\eqref{lem:torelli-finitelymanycomponentsproof1} has order $\leq 2$,
and hence
\begin{equation*}
 \pi_0 (\Tor_g^{2n}) \to \pi_0 (S \Diff (W_{k}^{2n}))
\end{equation*}
is surjective with finite kernel. Kreck \cite[Theorem 2]{Kreck79}
shows moreover the existence of an exact sequence $0 \to
\Theta_{2n+1}/\Sigma \to \pi_0 (S \Diff (W_{k}^{2n})) \to
\Hom(H_n(W_g^{2n};\bZ);S \pi_n (SO(n))) \to 0$, where $\Theta_{2n+1}$
is the group of homotopy spheres which is finite by \cite{KervMil} and
$\Sigma$ is a certain subgroup, and $S \pi_n (SO(n)) \subset \pi_n
(SO(n+1))$ is the image of the stabilization map $SO(n) \to
SO(n+1)$. The values of $S \pi_n (SO(n))$ are tabularized
in \cite[p. 644]{Kreck79}, and from that table, one reads off that
$\Hom(H_n(W_g^{2n};\bZ);S \pi_n (SO(n)))$ is finite unless $n \equiv 3
\pmod 4$.
\end{proof}
The hard part is now to show that the tautological classes
$\kappa_{\hat{A}_k} \in H^{4k-2n} (B \Tor_g^{2n};\bQ)$ are
nontrivial. This was essentially done in \cite{ER-W}, and we only
recall the main points. Consider the graded algebra
$\bQ[p_{\ceil{\frac{n+1}{4}}}, \ldots,p_n,e]/(e^2-p_n) $, which is
the rational cohomology algebra of the $n$-connected cover of
$BO(2n)$. Define a graded vector space by
\begin{equation*}
( \sigma^{-2n} \bQ[p_{\ceil{\frac{n+1}{4}}}, \ldots,p_n,e]/(e^2-p_n))_j:=
\begin{cases}
 (\bQ[p_{\ceil{\frac{n+1}{4}}}, \ldots,p_n,e]/(e^2-p_n))_{j+2n} & j >0, \\
 0 & j \leq 0.
\end{cases}
\end{equation*}
Assigning to $c \in \sigma^{-2n} \bQ[p_{\ceil{\frac{n+1}{4}}},
  \ldots,p_n,e]/(e^2-p_n)$ the tautological class $\kappa_c$ of the universal $W_{g}^{2n}$-bundle defines a
map of graded vector spaces
\begin{equation*}
 \sigma^{-2n} \bQ[p_{\ceil{\frac{n+1}{4}}}, \ldots,p_n,e]/(e^2-p_n) \to H^* (B \Diff (W_g^{2n});\bQ),
\end{equation*}
which extends to a map from the free graded commutative algebra on the
source. The main results of \cite{GRW14} and \cite{GRWHomStabI} show
that this map is an isomorphism in degrees $j \leq (g-5)/2$.

Now let $L_k \in \bQ[p_{\ceil{\frac{n+1}{4}}},
  \ldots,p_n,e]/(e^2-p_n)$ be the $k$th component of the Hirzebruch
$L$-class. It is a consequence of the Atiyah-Singer family index
theorem for the signature operator that $\kappa_{L_k} \in H^{4k-2n}(B
\Diff (W^{2n}_g);\bQ)$ maps to zero in $H^* (B \Tor^{2n}_{g};\bQ)$,
see \cite[Theorem 2.9]{ER-W}. We remark that in loc.cit., the version
$\mathcal{L}_k$ of the $L$-class introduced by Atiyah and Singer
\cite[p. 577]{ASIII} is used, which is more natural from the viewpoint
of the index theorem. The distinction is not important for us, since
the two classes are related by the formula $\mathcal{L}_k =
\frac{1}{2^{2k}} L_k$.
\begin{theorem}[Ebert, Randal-Williams \cite{ER-W}]\label{thm:erwtheorem}
Let $C_g^{2n}$ be the largest integer with
\begin{enumerate}
 \item $C_g^{2n} \leq (g-5)/2$,
 \item $C_g^{2n} \leq n-3$, 
 \item $\max \{ 2C_{g}^{2n}+7, 3C_g^{2n} +4\} \leq 2n$.
\end{enumerate}
Then the map 
\begin{equation}\label{erw-map}
V:=  \frac{ \sigma^{-2n} \bQ[p_{\ceil{\frac{n+1}{4}}}, \ldots,p_n,e]/(e^2-p_n)}{\spann \{ L_k \vert k \in \bN \}} \to H^* (B \Tor_g^{2n};\bQ)
\end{equation}
is injective in degrees $\ast \le C^{2n}_g$.
\end{theorem}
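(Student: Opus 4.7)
The plan is to exploit the fibre sequence \eqref{fibre01}, namely $B\Tor_g^{2n} \xrightarrow{\eta} \BDiff(W_g^{2n},D) \to B\Gamma(W_g^{2n})$, via its rational Serre spectral sequence. On the total-space side we use the Galatius--Randal-Williams computation recalled just above: in degrees $* \leq (g-5)/2$, $H^*(\BDiff(W_g^{2n},D);\bQ)$ is the free graded commutative algebra on $\sigma^{-2n}\bQ[p_{\ceil{(n+1)/4}},\ldots,p_n,e]/(e^2-p_n)$, realised by the tautological classes $\kappa_c$. This is the first of the three constraints that enter the definition of $C_g^{2n}$.

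The restriction $\eta^*$ annihilates each $\kappa_{L_k}$ by the family signature index theorem, so it factors through $V$. Injectivity of $V \to H^*(B\Tor_g^{2n};\bQ)$ in the given range is equivalent to the statement that the kernel of $\eta^*$ in degrees $\leq C_g^{2n}$ is exactly the subspace $\spann\{\kappa_{L_k}\}$. By the Serre spectral sequence, this kernel is detected by the image of $\pi^*\colon H^*(B\Gamma(W_g^{2n});\bQ) \to H^*(\BDiff(W_g^{2n},D);\bQ)$ together with the targets of nonzero transgressions. The goal is therefore to prove that this combined image, in degrees $\leq C_g^{2n}$, coincides with $\spann\{\kappa_{L_k}\}$.

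For this I would apply Borel's theorems on the stable rational cohomology of the arithmetic group $\Gamma(W_g^{2n})$, which is a symplectic- or orthogonal-type group depending on the parity of $n$. Borel's results give both $H^*(B\Gamma;\bQ)$ in a stable range and vanishing statements for cohomology with algebraic twisted coefficients (precisely the kind of coefficient systems arising from the $\Gamma$-action on $H^*(B\Tor_g^{2n};\bQ)$). The numerical bounds $2C_g^{2n}+7 \leq 2n$ and $3C_g^{2n}+4 \leq 2n$ are exactly what is needed to ensure that these Borel ranges apply to all differentials and edge maps of the Serre spectral sequence in total degree $\leq C_g^{2n}$. One then identifies, as in \cite[Thm.~2.9]{ER-W}, the image of $\pi^*$ in the relevant range with $\spann\{\kappa_{L_k}\}$, using that the signature Borel classes transgress accordingly.

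The main obstacle will be synchronising the three stability bounds so that all spectral-sequence arguments go through simultaneously: the Galatius--Randal-Williams range $(g-5)/2$ controlling the cohomology of $\BDiff(W_g^{2n},D)$; the dimensional constraint $C_g^{2n}\leq n-3$, which governs the tangential structure and the shape of the generating set $p_{\ceil{(n+1)/4}},\ldots,p_n,e$; and the Borel ranges $2C_g^{2n}+7 \leq 2n$ and $3C_g^{2n}+4 \leq 2n$ for the twisted cohomology of $\Gamma(W_g^{2n})$, which are themselves tuned to the lengths of products of generators entering the Serre spectral sequence. The integer $C_g^{2n}$ is defined precisely as the largest value satisfying all three, and the upshot is that below that threshold the only relations among tautological classes in $H^*(B\Tor_g^{2n};\bQ)$ are those coming from the $\kappa_{L_k}$'s.
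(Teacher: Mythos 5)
Your plan is not the paper's argument: the paper does not reprove the computation behind this statement at all, but deduces it from \cite{ER-W} via the remark following the theorem (the map $B\Tor_g^{2n}\to\Omega^\infty F$ induces an isomorphism onto $H^*(B\Tor_g^{2n};\bQ)^{\Gamma(W_g^{2n})}$ in degrees $\leq C_g^{2n}$, and $H^*(\Omega^\infty F;\bQ)$ is free graded-commutative on $V$, so injectivity on the generators follows a fortiori). What you propose is, in effect, to redo the hard theorem of \cite{ER-W}, and as sketched it has a decisive gap: to run the Serre spectral sequence of \eqref{fibre01} you must apply Borel's vanishing theorem to the coefficient systems $H^q(B\Tor_g^{2n};\bQ)$, and that theorem only applies once you know these are finite-dimensional \emph{algebraic} $\Gamma(W_g^{2n})$-representations in the relevant range. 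This is not a formality and is nearly circular, since $H^*(B\Tor_g^{2n};\bQ)$ is exactly the unknown; establishing this algebraicity (via comparison with block diffeomorphisms, surgery, Kreck's results and Igusa's pseudoisotopy stability) is the technical core of \cite{ER-W}, and your proposal assumes it silently. Relatedly, your reading of the numerical hypotheses is off: $\max\{2C_g^{2n}+7,\,3C_g^{2n}+4\}\leq 2n$ is precisely Igusa's pseudoisotopy stable range in dimension $d=2n$ (and $C_g^{2n}\leq n-3$ is likewise a dimension constraint), not a Borel range --- Borel's ranges depend only on the rank of the arithmetic group, i.e.\ on $g$, not on $n$.

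There is also a logical slip in the reduction itself. The statement ``$\ker\eta^*$ in degrees $\leq C_g^{2n}$ equals $\spann\{\kappa_{L_k}\}$'' is neither equivalent to the theorem nor true: $\ker\eta^*$ is an ideal, it contains the image of $\pi^*$ (which in the stable range is a polynomial algebra on Borel classes, hence contains decomposables), all multiples $\kappa_{L_k}\cdot x$, and, in the spectral sequence picture, the filtration $F^1H^*(\BDiff(W_g^{2n},D);\bQ)$ receives contributions $H^p(\Gamma(W_g^{2n});\bQ)\otimes(\text{trivial summands of }H^q(B\Tor_g^{2n};\bQ))$ with $p,q\geq 1$ which Borel vanishing does not kill. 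What the theorem actually requires is only that $\ker\eta^*$ meets the \emph{linear span} of the generator classes $\kappa_c$ in $\spann\{\kappa_{L_k}\}$, and the bookkeeping you describe does not isolate that intersection; this is exactly why \cite{ER-W} (and hence the paper) work with the full $\Gamma$-invariant subalgebra via the infinite loop space $\Omega^\infty F$ rather than with a naive kernel computation. The ingredients you name (Galatius--Randal-Williams stability, the family signature theorem killing $\kappa_{L_k}$ on the Torelli group, Borel's theorems) are indeed the right ones, but the two essential steps --- algebraicity of the coefficients and the identification of the invariant cohomology with the free algebra on $V$ --- are asserted rather than proved, so the proposal does not yet constitute a proof.
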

\begin{remark}
Theorem \ref{thm:erwtheorem} is not stated as such in \cite{ER-W}. It
is shown in \cite[\S 5]{ER-W} that a certain map $B \Tor_g^{2n} \to
\Omega^\infty F$ to a certain infinite loop space induces an
isomorphism $H^* (\Omega^\infty F;\bQ) \to H^* (B \Tor_g^{2n}
;\bQ)^{\Gamma (W_{g}^{2n})} $ in degrees $* \leq C^{2n}_g$. Hence a
fortiori, $H^* (\Omega^\infty F;\bQ) \to H^* (B \Tor_g^{2n} ;\bQ)$ is
injective. The rational cohomology of $\Omega^\infty F$ is the free graded-commutative $\bQ$-algebra
generated by the left-hand side of \eqref{erw-map}. This is discussed
in \cite[\S 5, \S 2.4]{ER-W}.
\end{remark}
\begin{remark}
Recently, Kupers and Randal--Williams \cite[Theorem A]{KRW}, \cite[Theorem A]{KRWa} gave a
complete computation of $H^* (B \Tor_g^{2n};\bQ)$, valid for all $n
\geq 3$ and large $g$. Even though it is far from obvious from the results
stated in \cite{KRW}, their calculations imply that the map \eqref{erw-map} is injective if $\ast \leq (g-5)/2$ and $n \geq 3$. This allows
the lower bounds on $n$ in Theorem \ref{mainthm} to be replaced by $n \geq 3$.
\end{remark}

\begin{lemma}\label{lem:characetristtic-class-computation}
If $k \geq 2\ceil{\frac{n+1}{4}}$, the image of $\hat{A}_k$ in the
quotient space $V$ on the left hand side of \eqref{erw-map} is
nonzero.
\end{lemma}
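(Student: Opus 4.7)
The plan is to compare $\hat A_k$ and $L_k$ after expanding them in the power-sum basis of symmetric functions, and to derive a contradiction to their proportionality from a sign argument. First I would express both classes via their logarithms: setting $Q(x) = (x/2)/\sinh(x/2)$ and $R(x) = x/\tanh x$, write $\log Q(x) = \sum_{j \geq 1} \phi_j x^{2j}$ and $\log R(x) = \sum_{j \geq 1} \gamma_j x^{2j}$. The multiplicativity of these characteristic classes then yields $\hat A = \exp(\sum_j \phi_j s_j)$ and $L = \exp(\sum_j \gamma_j s_j)$, where $s_j$ denotes the $j$-th power sum of the squared Pontrjagin roots. Expanding the exponentials,
\[
\hat A_k = \sum_{\lambda \vdash k} \frac{\prod_i \phi_{\lambda_i}}{\prod_j m_j(\lambda)!}\, s_\lambda, \qquad L_k = \sum_{\lambda \vdash k} \frac{\prod_i \gamma_{\lambda_i}}{\prod_j m_j(\lambda)!}\, s_\lambda,
\]
where $s_\lambda = \prod_i s_{\lambda_i}$ and $m_j(\lambda)$ is the multiplicity of $j$ in $\lambda$. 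Passing to the target ring $\bQ[p_m,\ldots,p_n,e]/(e^2-p_n)$ kills $p_1,\ldots,p_{m-1}$, and Newton's identities then force $s_j = 0$ there for all $j < m$; so only $s_\lambda$ indexed by partitions with all parts $\geq m$ survive. A triangularity argument, based on the fact that $s_j$ equals $(-1)^{j-1} j\,p_j$ plus a polynomial in $p_m,\ldots,p_{j-1}$ for $m \leq j \leq n$, shows that these surviving $s_\lambda$ are linearly independent in the quotient, at least in the regime $k \leq n$ relevant to the Main Theorem.

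The key computation will be the sign of $\psi_k := \phi_k/\gamma_k$. Using the Weierstrass products $\sinh(z)/z = \prod_{n \geq 1}(1+z^2/(n\pi)^2)$ and $\cosh z = \prod_{n \geq 0}(1+z^2/((n+\tfrac{1}{2})\pi)^2)$, together with termwise application of $\log(1+u) = \sum_k (-1)^{k-1} u^k/k$ and the identity $\sum_{n \geq 0}(n+\tfrac{1}{2})^{-2k} = (2^{2k}-1)\zeta(2k)$, one extracts
\[
\phi_k = \frac{(-1)^k \zeta(2k)}{k(2\pi)^{2k}}, \qquad \gamma_k = \frac{(-1)^{k-1}(2^{2k}-2)\zeta(2k)}{k\pi^{2k}}.
\]
Since $\zeta(2k) > 0$ and $2^{2k} > 2$ for all $k \geq 1$, the signs of $\phi_k$ and $\gamma_k$ are always opposite, so $\psi_k = -1/[2^{2k+1}(2^{2k-1}-1)] < 0$.

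Finally, I would suppose for contradiction that $\hat A_k = c\,L_k$ in the quotient ring. Comparing the coefficient of $s_{(k)}$ forces $c = \psi_k < 0$. On the other hand, since $k \geq 2m$, the partition $(m,k-m)$ has both parts $\geq m$; comparing the coefficient of $s_m s_{k-m}$ (or of $s_m^2$ when $k = 2m$) forces $c = \psi_m\psi_{k-m}$ (respectively $c = \psi_m^2$), and this quantity is strictly positive, being a product of two negatives (respectively a square). This sign contradiction shows that no such $c$ exists, so $\hat A_k$ is not a scalar multiple of $L_k$ in the quotient, and its image in $V$ is nonzero. The main obstacle is the sign computation itself: it rests on the Weierstrass products and the elementary fact $2^{2k}-2 > 0$, and once $\psi_k < 0$ is established, the rest of the argument is essentially pure linear algebra.
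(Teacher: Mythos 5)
Your proposal is correct in the range you state, and it covers what the Main Theorem actually needs, but it takes a genuinely different route from the paper. The paper's own proof is a short reduction: writing $k=a+b$ with both summands at least $\ceil{\frac{n+1}{4}}$, it passes to the quotient $V'$ of $V$ spanned by the images of the two monomials $p_k$ and $p_ap_b$ and then simply quotes \cite[Proposition 5.3]{HSS} for the fact that $\hat{A}_k$ and $L_k$ have non-proportional coefficients on these two monomials, hence $\hat{A}_k\notin \bQ\cdot L_k$. You instead argue in the power-sum basis: after killing $p_1,\dots,p_{m-1}$ (where $m=\ceil{\frac{n+1}{4}}$) Newton's identities kill $s_j$ for $j<m$, the surviving $s_\lambda$ with all parts $\geq m$ are linearly independent by triangularity over the monomial basis as long as $k\leq n$, and comparing the coefficients of $s_{(k)}$ and of $s_{(m,k-m)}$ (or $s_m^2$) forces the hypothetical proportionality constant to be simultaneously $\psi_k<0$ and $\psi_m\psi_{k-m}>0$ (or $\psi_m^2>0$). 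Your formulas $\phi_j=\frac{(-1)^j\zeta(2j)}{j(2\pi)^{2j}}$ and $\gamma_j=\frac{(-1)^{j-1}(2^{2j}-2)\zeta(2j)}{j\pi^{2j}}$ are correct (they reproduce $\hat{A}_1=-p_1/24$ and $L_1=p_1/3$), so the sign clash is real. In effect you give a self-contained version of the computation the paper outsources to \cite{HSS}: the paper's choice of the monomials $p_k,\ p_ap_b$ avoids your linear-independence step, while your power-sum basis makes the coefficients multiplicative and the sign mechanism transparent. One caveat to flag: your triangularity step genuinely needs $k\leq n$ (for $k>n$ the relations $p_j=0$ for $j>n$ create linear relations among the $s_\lambda$, e.g. $s_{n+1}$ becomes decomposable), so as written you prove the lemma only for $2\ceil{\frac{n+1}{4}}\leq k\leq n$ rather than for all $k\geq 2\ceil{\frac{n+1}{4}}$ as stated. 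Since the proof of the Main Theorem only uses the smallest admissible $k$, namely $k_0=2\ceil{\frac{n+1}{4}}\leq n$, this restriction is harmless --- and note that the paper's own reduction tacitly requires $k\leq n$ as well, since otherwise $p_k$ is not a monomial of the ring.
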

\begin{proof}
This follows from a characteristic class computation that has been carried out in \cite[Section 5]{HSS}. If $k \geq 2\ceil{\frac{n+1}{4}}$, it can be written in the form $k=a+b$ with $\ceil{\frac{n+1}{2}} \leq a \leq b$. Consider the quotient $V'$ of $V$ by the subspace generated by all monomials \emph{except} $p_k$ and $p_a p_b$ (note that $\dim (V') \leq 2$). By \cite[Proposition 5.3]{HSS}, the image of $\hat{A}_k$ in $V'$ is nonzero and hence it must be nonzero in $V$. 
\end{proof}
\begin{proof}[Proof of Theorem \ref{mainthm}]
Write $M = N \sharp W_{g}^{2n}$. The group $\Diff(W^{2n}_g;D)=
\Diff_\partial (W^{2n}_g\setminus D)$ and hence its subgroup $
\Tor_g^{2n}$ acts by diffeomorphisms on $M$: extend diffeomorphisms by
the identity over all of $M$.  Consider the fibre bundles
\[
 E_0:=  E \Tor_{g}^{2n} \times_{\Tor_{g}^{2n}} W^{2n}_g \to B \Tor_{g}^{2n} =:B
\]
and 
\[
 E:=  E \Tor_{g}^{2n} \times_{\Tor_{g}^{2n}} M \to B
\]
with fibres $W_{g}^{2n}$ and $M$, respectively.  An obstruction theory
argument (see e.g. \cite[Lemma 3.29]{BER-W}) shows that these bundles
admit spin structures (since the structure group fixes a disc in the fibre, which is connected).  By Lemma \ref{lem:torelli-finitelymanycomponents}, $B$ has
finite fundamental group. 

Depending on the value of $n$ modulo $4$, consider the smallest $k_0 \in \bN$ such that $k_0 \geq 2 \ceil{\frac{n+1}{4}}$; the class $\hat{A}_{k_0}$ is an element in the degree $q_0=4k_0-2n$ part of the space $V$. Then $q_0= 8,6,4$ in the cases $n \equiv 0,1,2 \pmod 4$. For $n$ and $g$ obeying the bounds stated in Theorem \ref{mainthm}, it follows from Theorem \ref{thm:erwtheorem} and Lemma \ref{lem:characetristtic-class-computation} that the tautological class $\kappa_{\hat{A}_{k_0}}(E_0)\in H^{q_0}(B;\bQ)$ is nontrivial. 

The bundle $E$ is fibrewise cobordant to the disjoint union of $E_0$ with the trivial bundle $B
\times N$. The classical argument \cite[Lemma 17.3]{MS} for the
cobordism invariance of Pontrjagin numbers shows that
\[
 \kappa_{\hat{A}_k} (E) =  \kappa_{\hat{A}_k} (E_0)+   \kappa_{\hat{A}_k} (B \times N)=  \kappa_{\hat{A}_k} (E_0)
\]
(since $4k-2n>0$, the second summand is zero). Hence
all hypotheses of Theorem \ref{thm:generaldetection} are satisfied,
and the proof is complete.
\end{proof}

\bibliographystyle{plain}
\bibliography{ricci}

\begin{thebibliography}{10}

\bibitem{ASIII}
M.~F. Atiyah and I.~M. Singer.
\newblock The index of elliptic operators. {III}.
\newblock {\em Ann. of Math. (2)}, 87:546--604, 1968.

\bibitem{BER-W}
B.~Botvinnik, J.~Ebert, and O.~Randal-Williams.
\newblock Infinite loop spaces and positive scalar curvature.
\newblock {\em Invent. Math.}, 209(3):749--835, 2017.

\bibitem{Burdick2}
B.~{Burdick}.
\newblock {Ricci-positive metrics on connected sums of products with
  arbitrarily many spheres}.
\newblock {\em ArXiv e-prints}, March 2019.

\bibitem{Burdick}
B.~Burdick.
\newblock Ricci positive metrics on connected sums of projective spaces.
\newblock {\em Differential Geometry and its Applications}, 62:212--233, 2019.

\bibitem{Chernysh}
V.~Chernysh.
\newblock On the homotopy type of the space $\mathcal{R}^{+}({M})$.
\newblock arXiv:math/0405235, 2004.

\bibitem{CS}
D.~Crowley and T.~Schick.
\newblock The {G}romoll filtration, {KO}--characteristic classes and metrics of
  positive scalar curvature.
\newblock {\em Geom. Topol.}, 17(3):1773--1789, 2013.

\bibitem{CSS}
Diarmuid Crowley, Thomas Schick, and Wolfgang Steimle.
\newblock Harmonic spinors and metrics of positive curvature via the {G}romoll
  filtration and {T}oda brackets.
\newblock {\em J. Topol.}, 11(4):1077--1099, 2018.

\bibitem{ER-W}
J.~Ebert and O.~Randal-Williams.
\newblock Torelli spaces of high-dimensional manifolds.
\newblock {\em J. Topol.}, 8(1):38--64, 2015.

\bibitem{GRW14}
S.~Galatius and O.~Randal-Williams.
\newblock Stable moduli spaces of high-dimensional manifolds.
\newblock {\em Acta Math.}, 212(2):257--377, 2014.

\bibitem{GRWHomStabI}
S.~Galatius and O.~Randal-Williams.
\newblock Homological stability for moduli spaces of high dimensional
  manifolds. {I}.
\newblock {\em J. Amer. Math. Soc.}, 31(1):215--264, 2018.

\bibitem{GL}
M.~Gromov and H.~B. Lawson.
\newblock The classification of simply connected manifolds of positive scalar
  curvature.
\newblock {\em Ann. of Math. (2)}, 111(3):423--434, 1980.

\bibitem{GromovBetti}
Michael Gromov.
\newblock Curvature, diameter and {B}etti numbers.
\newblock {\em Comment. Math. Helv.}, 56(2):179--195, 1981.

\bibitem{HSS}
B.~Hanke, T.~Schick, and W.~Steimle.
\newblock The space of metrics of positive scalar curvature.
\newblock {\em Publ. Math. Inst. Hautes \'Etudes Sci.}, 120:335--367, 2014.

\bibitem{Hatcher}
Allen Hatcher.
\newblock {\em Algebraic topology}.
\newblock Cambridge University Press, Cambridge, 2002.

\bibitem{HitchinSpin}
Nigel Hitchin.
\newblock Harmonic spinors.
\newblock {\em Advances in Math.}, 14:1--55, 1974.

\bibitem{KervMil}
Michel~A. Kervaire and John~W. Milnor.
\newblock Groups of homotopy spheres. {I}.
\newblock {\em Ann. of Math. (2)}, 77:504--537, 1963.

\bibitem{Kreck79}
M.~Kreck.
\newblock Isotopy classes of diffeomorphisms of {$(k-1)$}-connected
  almost-parallelizable {$2k$}-manifolds.
\newblock In {\em Algebraic topology, {A}arhus 1978 ({P}roc. {S}ympos., {U}niv.
  {A}arhus, {A}arhus, 1978)}, volume 763 of {\em Lecture Notes in Math.}, pages
  643--663. Springer, Berlin, 1979.

\bibitem{KRW}
Alexander {Kupers} and Oscar {Randal-Williams}.
\newblock {On the cohomology of Torelli groups}.
\newblock {\em arXiv e-prints}, page arXiv:1901.01862, Jan 2019.

\bibitem{KRWa}
Alexander {Kupers} and Oscar {Randal-Williams}.
\newblock {The cohomology of Torelli groups is algebraic}.
\newblock {\em arXiv e-prints}, page arXiv:1908.04724, Aug 2019.

\bibitem{MS}
John~W. Milnor and James~D. Stasheff.
\newblock {\em Characteristic classes}.
\newblock Princeton University Press, Princeton, N. J., 1974.
\newblock Annals of Mathematics Studies, No. 76.

\bibitem{P17-1}
N.~{Perlmutter}.
\newblock {Cobordism Categories and Parametrized Morse Theory}.
\newblock {\em ArXiv e-prints}, March 2017.

\bibitem{P17-2}
N.~{Perlmutter}.
\newblock {Parametrized Morse Theory and Positive Scalar Curvature}.
\newblock {\em ArXiv e-prints}, May 2017.

\bibitem{Petersen}
Peter Petersen.
\newblock {\em Riemannian geometry}, volume 171 of {\em Graduate Texts in
  Mathematics}.
\newblock Springer, New York, second edition, 2006.

\bibitem{ShaYang}
Ji-Ping Sha and DaGang Yang.
\newblock Positive {R}icci curvature on the connected sums of {$S^n\times
  S^m$}.
\newblock {\em J. Differential Geom.}, 33(1):127--137, 1991.

\bibitem{WalshC}
M.~Walsh.
\newblock Cobordism invariance of the homotopy type of the space of positive
  scalar curvature metrics.
\newblock {\em Proc. Amer. Math. Soc.}, 141(7):2475--2484, 2013.

\bibitem{Wraith2}
D.~J. Wraith.
\newblock New connected sums with positive {R}icci curvature.
\newblock {\em Ann. Glob. Anal. Geom.}, 32:343--360, 2007.

\bibitem{Wraith}
D.~J. Wraith.
\newblock On the moduli space of positive {R}icci curvature metrics on homotopy
  spheres.
\newblock {\em Geom. Topol.}, 15(4):1983--2015, 2011.

\end{thebibliography}
\end{document}